\newtheorem{theorem}{Theorem}[section]
\newtheorem{lemma}[theorem]{Lemma}
\newtheorem{corollary}[theorem]{Corollary}
\newtheorem{remark}[theorem]{Remark}
\newcommand\pf{\begin{proof}}
\newcommand\epf{\end{proof}}
\newcommand\co{\operatorname{co}}
\newcommand\Alg{\operatorname{Alg}}
\DeclareMathOperator{\id}{id}
\DeclareMathOperator{\Gal}{Gal}
\numberwithin{equation}{section}
\title{The group of bi-Galois objects over the coordinate algebra of  the Frobenius-Lusztig kernel of ${\rm SL}(2)$}
\author{Julien Bichon}
\address{
Laboratoire de Math\'ematiques,
Universit\'e Blaise Pascal,
Complexe universitaire des C\'ezeaux,
63171~Aubi\`ere Cedex, France}
\email{Julien.Bichon@math.univ-bpclermont.fr}
\subjclass[2010]{16T05, 18D10}
\begin{document}

\begin{abstract}
 We construct, for $q$ a root of unity of odd order, an embedding of the projective special linear group ${\rm PSL}(n)$ into the group of bi-Galois objects over $u_q(sl(n))^*$,  the coordinate algebra of  the Frobenius-Lusztig kernel of ${\rm SL}(n)$, which is shown to be an isomorphism at $n=2$. 
\end{abstract}

\maketitle

\section{introduction}

Let $\mathcal C$ be finite tensor category over $k$, an algebraically closed field of characteristic zero. The Brauer-Picard group of $\mathcal C$ \cite{eno,dn}, denoted ${\rm BrPic}(\mathcal C)$, consists of equivalence classes of invertible exact $\mathcal C$-bimodule categories, endowed with the group law induced by a natural (but technically involved) notion of tensor product. 
It is an important invariant of $\mathcal C$, but it seems to be very difficult to compute, or even difficult to find non-trivial subgroups of it. The Brauer-Picard group of $\mathcal C$ also coincides with the Brauer group of
the Drinfeld center of $\mathcal C$ as defined in \cite{vOZ}, in terms of Azumaya algebras, see \cite{dn}.
If $\mathcal C= {\rm mod}(H)$ is the category of finite-dimensional representations of a finite-dimensional Hopf algebra $H$, then the Brauer-Picard group is known as the strong Brauer of $H$ \cite{coz},
which is also notoriously known to be difficult to compute: see \cite{cc} for the latest developments (for the case of the Sweedler algebra) before the new technology from \cite{eno,dn}, and see \cite{boni,mom} for recent computations. 

A key result to understand the Brauer-Picard group is given in  \cite{eno,dn},
 where it is shown to be isomorphic with ${\rm Aut}^{\rm br}(\mathcal Z(\mathcal C))$, the group of isomorphism classes of braided auto-equivalences of the Drinfeld center of $\mathcal C$, thus providing a more tractable description.
Since any tensor auto-equivalence of $\mathcal C$ obviously induces a braided tensor auto-equivalence of $\mathcal Z(\mathcal C)$, there exists a group morphism from  ${\rm Aut}^{\otimes}(\mathcal C)$ (the group of isomorphism classes of tensor auto-equivalences of $\mathcal C$) into
${\rm Aut}^{\rm br}(\mathcal Z(\mathcal C))$, and hence into ${\rm BrPic}(\mathcal C)$.
Therefore the study of  ${\rm Aut}^{\otimes}(\mathcal C)$ seems to be an important step to understand the structure of the Brauer-Picard group.

When $\mathcal C= {\rm mod}(H)$ is the category of finite-dimensional representations of a finite-dimensional Hopf algebra $H$, it is shown by Schauenburg in \cite{sc1} that  the group  ${\rm Aut}^{\otimes}({\rm mod}(H))$ is isomorphic to ${\rm BiGal}(H^*)$, the group of $H^*$-bi-Galois objects. This result is very useful to construct tensor auto-equivalences, and Schauenburg has moreover developed some powerful techniques to describe and study (bi-)Galois objects over a given Hopf algebra \cite{scha,sc00}. 
The aim of this paper is to show how to use these techniques to compute the group of bi-Galois objects over $u_q(sl(2))^*$ at an odd root of unity, the coordinate algebra of  the Frobenius-Lusztig kernel of ${\rm SL}(2)$, and hence the group ${\rm Aut}^{\otimes}({\rm mod}(u_q(sl(2)))$, as follows.

\begin{theorem}\label{main}
 Let $q$ be a root of unity of odd order $N>1$, and let $n\geq 2$. There exists an injective group morphism
$${\rm PSL}(n) \longrightarrow {\rm BiGal}(u_q(sl(n))^*)$$
which is an isomorphism at $n=2$.
\end{theorem}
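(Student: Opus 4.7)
The proof splits naturally into two parts: constructing the embedding ${\rm PSL}(n) \hookrightarrow {\rm BiGal}(u_q(sl(n))^*)$ for general $n \geq 2$, and establishing surjectivity at $n=2$.

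For the embedding, the plan is to exploit the quantum Frobenius exact sequence of Hopf algebras
$$k \longrightarrow \mathcal{O}(SL(n)) \longrightarrow \mathcal{O}_q(SL(n)) \longrightarrow u_q(sl(n))^* \longrightarrow k,$$
together with the fact that the adjoint action of $SL(n)$ on $sl(n)$, lifted to Lusztig's integral form $U_q^{\rm res}(sl(n))$, induces an action of $SL(n)$ on $\mathcal{O}_q(SL(n))$ by Hopf algebra automorphisms. I would first check that this action preserves the normal sub-Hopf algebra $\mathcal{O}(SL(n))$ and hence descends to $u_q(sl(n))^*$, and that the center of $SL(n)$ acts trivially on the quotient. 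This should yield a group morphism $\alpha \colon {\rm PSL}(n) \to {\rm Aut}_{{\rm Hopf}}(u_q(sl(n))^*)$, which I would post-compose with the standard map ${\rm Aut}_{{\rm Hopf}}(H) \to {\rm BiGal}(H)$ sending a Hopf automorphism $\sigma$ to the bi-Galois object obtained from $H$ by twisting the right coaction by $\sigma$. For injectivity, I would go through Schauenburg's equivalence ${\rm BiGal}(u_q(sl(n))^*) \cong {\rm Aut}^{\otimes}({\rm mod}(u_q(sl(n))))$ and argue that the induced tensor auto-equivalence, applied to the fundamental $n$-dimensional module of $u_q(sl(n))$ coming from $U_q^{\rm res}(sl(n))$, reproduces the adjoint action of ${\rm PSL}(n)$, which is faithful.

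For the surjectivity claim at $n=2$, the strategy is to classify all bi-Galois objects over $u_q(sl(2))^*$ directly. I would apply Schauenburg's techniques for Hopf algebras sitting inside cleft exact sequences to reduce the problem to descent data along the quantum Frobenius sequence for $SL(2)$: each left Galois object over $u_q(sl(2))^*$ should be induced from a Galois object over $\mathcal{O}_q(SL(2))$ whose underlying $\mathcal{O}(SL(2))$-torsor is trivialisable (since $SL(2)$ is simply connected and $\mathcal{O}(SL(2))$-Galois objects are classified by such torsors), and the bi-Galois structure places rigidity constraints on the resulting descent data. I would then match the classification against the image of ${\rm PSL}(2)$ constructed in the first part.

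The main obstacle will be precisely this surjectivity step. The delicate point is to exclude exotic bi-Galois structures coming from non-trivial $2$-cocycle deformations of $u_q(sl(2))^*$ that do not correspond to any Hopf automorphism, which amounts to a rigidity/cohomological vanishing statement for the group of lazy cocycles modulo coboundaries of $u_q(sl(2))^*$. Producing it, either by explicit cocycle computation on the generators of $u_q(sl(2))^*$ or by exploiting the short list of simple objects of ${\rm mod}(u_q(sl(2)))$ through a categorical reformulation, is where the bulk of the technical work will have to go.
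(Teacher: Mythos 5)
Your first step does not work as stated, and the failure is structural rather than technical. There is no action of ${\rm SL}(n)$ on $\mathcal O({\rm SL}_q(n))$ (or on $u_q(sl(n))^*$) by Hopf algebra automorphisms extending the adjoint action: for $q$ a root of unity of order $N>1$ these Hopf algebras are rigid, and indeed one proves that every Hopf algebra automorphism of $u_q(sl(2))^*$ is a diagonal automorphism $f_r$, $r\in k^*$. So any map factoring through ${\rm Aut}_{\rm Hopf}(u_q(sl(n))^*)\rightarrow {\rm BiGal}(u_q(sl(n))^*)$ lands in the subgroup of bi-Galois objects that are \emph{trivial as right comodule algebras} (a small, abelian group at $n=2$), and cannot be injective on ${\rm PSL}(n)$. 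The correct mechanism is the transgression map attached to the quantum Frobenius exact sequence: since $\mathcal O({\rm SL}(n))$ is central in $\mathcal O({\rm SL}_q(n))$, one has ${\rm Alg}_{\mathcal O({\rm SL}_q(n))}(\mathcal O({\rm SL}(n)),k)={\rm SL}(n)$, and Schauenburg's exact sequence, in the bi-Galois form of Corollary \ref{corobigal}, sends $g=(g_{ij})$ to the bi-Galois object $T_g=\mathcal O({\rm SL}_q(n))/(x_{ij}^N-g_{ij})$. These $T_g$ are non-trivial as right Galois objects whenever $g$ is not diagonal---which is exactly how the kernel computation produces ${\rm PSL}(n)$---so they are not of the form ${}^f\! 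H$ for any Hopf automorphism $f$.

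For surjectivity at $n=2$ your outline is closer in spirit but misses the two actual inputs and misidentifies the difficulty. What is needed is: (i) ${\rm Gal}(u_q(sl(2))^*)={\rm Cleft}(u_q(sl(2))^*)$ because the Hopf algebra is finite-dimensional, and cleft Galois objects over $\mathcal O({\rm SL}_q(2))$ are trivial (Aubriot), so every right Galois object over $u_q(sl(2))^*$ becomes trivial in ${\rm Gal}(\mathcal O({\rm SL}_q(2)))$ and hence, by exactness at ${\rm Gal}(L)$ in Theorem \ref{schexact}, is isomorphic to some $T_g$ as a right comodule algebra; (ii) any bi-Galois object isomorphic to $T_g$ as a right comodule algebra is of the form ${}^f\! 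T_g$ for a Hopf automorphism $f$, and the classification $f=f_r$ together with the identification ${}^{f_r}T_g\simeq T_{g'}$ finishes the argument. In particular no separate vanishing statement for lazy cocycles is required: the triviality of ${\rm H}^2_{\ell}(u_q(sl(2))^*)$ is a \emph{consequence} of the theorem (simplicity of ${\rm PSL}(2)$ plus the existence of a non-lazy cocycle pulled back from a Taft algebra quotient), not an ingredient of its proof. As written, your proposal would stall both at the construction of the embedding and at the point where you need to know that the relevant Galois objects over $\mathcal O({\rm SL}_q(2))$ are trivial.
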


An important difference between the present case of  $u_q(sl(2))^*$ and the few other known computations of groups of bi-Galois objects for finite-dimensional non-cosemisimple Hopf algebras \cite{sc00, bi,memo} is that  $u_q(sl(2))^*$ is not pointed.  Thus its seems difficult to use the techniques of these papers. 
The key tool used here will be a result by Schauenburg in \cite{scha} (Corollary 3.3 there), yielding information on Galois objects over Hopf algebras involved in an exact sequence,  together with the fact that the cleft Galois objects over $\mathcal O({\rm SL}_q(2))$ are trivial if $q \not=1$ (\cite{au}).

As a consequence of Theorem \ref{main}, we also show that the lazy cohomology \cite{bc} of $u_q(sl(2))^*$ is trivial.

Note that for $q$ not a root of unity, the group of tensor auto-equivalences of the (semisimple) tensor category of finite-dimensional $U_q(sl(n))$-modules (or more generally of $U_q({\mathfrak g})$-modules for $\mathfrak g$ a semisimple Lie algebra) is described in \cite{nt}, and is much smaller than the one we get at roots of unity for $u_q(sl(n))$.

The paper is organized as follows. Section 2 consists of preliminaries, mainly  on Galois and bi-Galois objects over Hopf algebras. In particular we provide a detailed exposition of (a particular case of) Schauenburg's Corollary 3.3 in \cite{scha}, of which we provide a variation adapted to bi-Galois objects (Corollary \ref{corobigal}).
The proof of  Theorem \ref{main} is given in Section 3, that the reader might consult first to see how the group morphism in the statement is constructed.

\medskip

\noindent
\textbf{Acknowledgments.} I wish to thank Dmitri Nikshych for useful discussions. 

\section{Preliminaries}

\subsection{Notation and conventions} We assume that the reader is familiar with the theory of Hopf algebras, as in \cite{mon}. In particular we use Sweedler notation for coproducts and  comodules in the standard way.
A pointed set is a set with a distinguished element. A sequence of maps of pointed sets $(X,*_X) \rightarrow (Y,*_Y) \rightarrow (Z,*_Z)$ is said to be exact if the fiber of $*_Z$ is exactly the image of $X$.

\subsection{Galois and bi-Galois objects}
Let $H$ be a Hopf algebra.
A right $H$-Galois object 
is a non-zero right $H$-comodule algebra $T$ such that the linear map
$\kappa_r$ (the ``canonical'' map) defined by 
\begin{align*}
\kappa_r : T \otimes T & \longrightarrow T \otimes H \\
t \otimes t' & \longrightarrow tt'_{(0)} \otimes t'_{(1)}
\end{align*}
 is bijective.
An $H$-Galois object morphism is an $H$-colinear algebra 
morphism. 
The set of isomorphism classes of $H$-Galois objects
is denoted  Gal$(H)$. 

We now list a number of useful properties and constructions, that will be used freely later.

$\bullet$ Any morphism of  $H$-Galois objects  is an isomorphism (\cite{schnei}, Remark 3.11).

$\bullet$ An $H$-Galois object $T$ is trivial (i.e. $T \simeq H$ as $H$-comodule algebras) if and only if there exists an algebra map $\phi : T \rightarrow k$, an $H$-Galois object isomorphism $T \simeq H$ being then of the form $t \mapsto \phi(t_{(0)})t_{(1)}$. The set ${\rm Gal}(H)$ is viewed as a pointed set, the distinguished element being the isomorphism class of the trivial Galois object. 

$\bullet$ An $H$-Galois object $T$ is said to be cleft if $T\simeq H$ as right $H$-comodules. We denote by ${\rm Cleft}(H)$ the subset of $ {\rm Gal}(H)$ consisting of isomorphism classes of cleft $H$-Galois objects. If $H$ is finite-dimensional then ${\rm Cleft}(H)={\rm Gal}(H)$ (\cite{kc}), but this is not true in general, see \cite{bi1}.

$\bullet$  Let $T$ be an $H$-Galois object, and consider the map 
\begin{align*}
 H & \longrightarrow T \otimes T \\
h & \longmapsto \kappa_r^{-1}(1 \otimes h) =: h^{[1]} \otimes h^{[2]}
\end{align*}
It endows $T$ with a right $H$-module structure given by $t\cdot h$=   $h^{[1]} t h^{[2]}$, called the Miyashita- Ulbrich action, see e.g. \cite{dt}. A morphism of $H$-Galois objects  $T \rightarrow Z$ automatically commutes with the Miyashita-Ulbrich actions.

$\bullet$ A Hopf algebra map $\pi : H \rightarrow L$ induces a map ${\rm Gal}(L) \rightarrow {\rm Gal}(H)$, sending (the isomorphism class of) an $L$-Galois object $T$ to (the isomorphism class of) the $H$-Galois object $T\square_L H$, where $H$ has the left $L$-comodule structure induced by $\pi$ and the right $H$-comodule structure is induced by the coproduct of $H$. See e.g. \cite{schnei}, Remark 3.11. The $H$-Galois object  $T\square_L H$ is cleft if $T$ is $L$-cleft.
The Miyashita-Ulbrich action on $T\square_L H$ is given by 
$$(\sum_i t_i \otimes h_i)\cdot h = \sum_i \pi(h_{(2)})^{[1]} t_i \pi(h_{(2)})^{[2]} \otimes S(h_{(1)})h_ih_{(3)}$$

A left $H$-Galois object 
is a non-zero left $H$-comodule algebra $T$ such that the linear map
$\kappa_l$ defined by 
\begin{align*}
\kappa_l : T \otimes T & \longrightarrow H \otimes T \\
t \otimes t' & \longrightarrow t_{(-1)} \otimes t_{(0)} t'
\end{align*}
 is bijective. The previous considerations have  adaptations to left Galois objects. 

Let $H$ and $L$ be Hopf algebras. An  
 $L$-$H$-bi-Galois object \cite{sc1} is an $L$-$H$-bicomodule algebra $T$ which is both
a left $L$-Galois object and a right $H$-Galois object.
A morphism of $L$-$H$-bi-Galois object is a bicolinear algebra map, and the set of isomorphism classes of 
 $L$-$H$-bi-Galois objects is denoted ${\rm BiGal}(L,H)$, with ${\rm BiGal}(H)= {\rm BiGal}(H,H)$ (an $H$-$H$-bi-Galois object is simply called an $H$-bi-Galois object).
Here is a list of useful facts and constructions regarding bi-Galois objects, to be used freely in the rest of the paper.

$\bullet$  An $H$-bi-Galois object $T$ is trivial (i.e. $T \simeq H$ as $H$-bicomodule algebras) if and only if there exists an algebra map $\phi : T \rightarrow k$ satisfying $\phi(t_{(0)})t_{(1)}= \phi(t_{(0)})t_{-1}$ for any $t \in T$, an $H$-bi-Galois object isomorphism $T \simeq H$ being then of the form $t \mapsto \phi(t_{(0)})t_{(1)}$.

$\bullet$ Any $L$-$H$-bi-Galois object $T$ induces an equivalence of $k$-linear tensor categories between the categories of right comodules over $L$ and $H$,
 given by $V \mapsto V\square_L T$, and conversely
any such equivalence arises in this way, See \cite{sc1}, Corollary 5.7.

$\bullet$ If $T$, $Z$ are $H$-bi-Galois objects, so is the cotensor product $T\square_H Z$, and this construction induces a group structure on ${\rm BiGal}(H)$. With the construction of the previous item, this defines a group morphism from ${\rm BiGal}(H)$ to ${\rm Aut}^\otimes({\rm Comod}(H))$, the group of isomorphism classes of $k$-linear tensor auto-equivalences of ${\rm Comod}(H)$, which is an isomorphism. Again see  \cite{sc1}, Corollary 5.7.

$\bullet$ An $H$-bi-Galois object $T$ is said to be bicleft if $T\simeq H$ as bicomodules. The isomorphism classes of bicleft bi-Galois objects form a normal subgroup of ${\rm BiGal}(H)$, denoted ${\rm Bicleft}(H)$, isomorphic to the lazy cohomology group ${\rm H}^2_{\ell}(H)$ studied in \cite{bc}. The elements in ${\rm Bicleft}(A)$ correspond to isomorphism classes of tensor auto-equivalences that are isomorphic to the identity functor as linear functors. 
The group ${\rm H}^2_{\ell}(H)$ can be described by using lazy cocycles: cocycles (see \cite{mon}) $\sigma : H \otimes H \rightarrow k$ satisfying $\sigma(x_{(1)},y_{(1)})x_{(2)}y_{(2)}= \sigma(x_{(2)},y_{(2)})x_{(1)}y_{(1)}$
for any $x,y \in H$.
See \cite{bc}.

$\bullet$ Let $T$ be an $H$-bi-Galois object and let $f : H \rightarrow H$ be a Hopf algebra automorphism. 
From this one defines a new $H$ bi-Galois object ${^f\! T}$ which is $T$ as a right Galois object, and whose left comodule structure is obtained by composing the left coaction of $T$ with $f$ in the obvious way. 
This defines an action of ${\rm Aut}_{\rm Hopf}(H)$ on ${\rm Bigal}(H)$.  The $H$-bi-Galois object ${^f\! T}$ is isomorphic with $T$ if and only $f$ is co-inner, i.e. there exists $\phi \in {\rm Alg}(H,k)$ such that $f = \phi*{\rm id}*\phi^{-1}$.

$\bullet$ If $T$ and $Z$ are $H$-bi-Galois objects that are isomorphic as right $H$-comodule algebras, then there exists a Hopf algebra automorphism $f : H \rightarrow H$ such that $Z\simeq {^f\! T}$ as bi-Galois objects. See Lemma 3.11 in \cite{sc1}.

\subsection{Exact sequences of Hopf algebras} Recall that a sequence  of Hopf algebra maps
\begin{equation*}k \to A \overset{i}\to H \overset{\pi}\to L \to k\end{equation*} is said to be exact  if the following
conditions hold:
\begin{enumerate}\item $i$ is injective, $\pi$ is surjective and $\pi i$ = $\varepsilon 1$,
\item ${\rm Ker}(\pi) =HA^+ =A^+H$, where $A^+=A\cap{\rm Ker}(\varepsilon)$,
\item $A = H^{\co \pi} = \{ x\in H:\, (\id \otimes \pi)\Delta(x) = x \otimes 1
\} = {^{\co \pi}H} = \{ x \in H:\, (\pi \otimes \id)\Delta(x) = 1 \otimes x
\}$. \end{enumerate}
A sequence as above satisfying (1), (2) and with $H$ faithfully flat as a right or left $A$-module is automatically exact (see e.g. \cite{ad}, Proposition 1.2.4, or \cite{schn}, Lemma 1.3).

In an exact sequence as above, the Hopf subalgebra $A \subset H$ is automatically normal, i.e. for $a \in A$ and $h \in H$, we have 
$$h_{(1)} a S(h_{(2)}) \in A, \ S(h_{(1)})ah_{(2)} \in A$$
See e.g. Lemma 3.4.2 in \cite{mon}. We will denote by ${\rm Alg}_H(A,k)$ the subset of ${\rm Alg}(A,k)$ consisting of algebra maps 
$\varphi : A \rightarrow k$ satisfying $\varphi(S(h_1)ah_2) = \varepsilon(h)\varphi(a)$, for any $h \in H$ and $a \in A$.
The subset ${\rm Alg}_H(A,k)$ is a subgroup of $\Alg(H,k)$: it is clearly stable under the convolution product, and stable under inverses by the following computation ($a\in A$, $h \in H$):
\begin{align*}
 \varphi S(S(h_{(1)})ah_{(2)}) &=  \varepsilon(a_{(1)}) \varphi S(S(h_{(1)})a_{(2)}h_{(2)}) = 
\varphi S(a_{(1)}) \varphi(a_{(2)}) \varphi S(S(h_{(1)})a_{(3)}h_{(2)})\\
& = \varphi S(a_{(1)}) \varphi(S(h_{(2)})a_{(2)}h_{(3)}) \varphi S(S(h_{(1)})a_{(3)}h_{(4)}) \\
& = \varphi S(a_{(1)}) \varphi((S(h_{(1)})a_{(2)}h_{(2)})_{(1)}) \varphi S((S(h_{(1)})a_{(2)}h_{(2)})_{(2)})\\
& = \varphi S(a_{(1)}) \varepsilon(S(h_{(1)})a_{(2)}h_{(2)})=\varphi S(a)\varepsilon(h)
\end{align*}

\subsection{Schauenburg's exact sequence}
We now recall our main tool to prove Theorem \ref{main}.
\begin{theorem}[\cite{scha}, Corollary 3.3]\label{schexact}
Let \begin{equation*}k \to A \overset{i}\to H \overset{\pi}\to L \to
k\end{equation*} be an exact sequence of Hopf algebras with $H$ faithfully flat as a right $A$-module. Then we have an exact sequence of pointed sets
 $$1 \longrightarrow \Alg(L,k) \longrightarrow \Alg(H,k) \longrightarrow \Alg_H(A,k) \longrightarrow \Gal(L) \longrightarrow \Gal(H)$$
\end{theorem}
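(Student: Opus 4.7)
The plan is to construct the four maps in turn and verify exactness at each junction. The first map sends $\psi \in \Alg(L,k)$ to $\psi \circ \pi$, and the second sends $\psi \in \Alg(H,k)$ to its restriction $\psi \circ i$. That this restriction lies in $\Alg_H(A,k)$ follows from the fact that $\psi$, being an algebra map to a field, is group-like in $H^*$, so $\psi \circ S$ is the convolution inverse of $\psi$. Hence $\psi(S(h_{(1)})ah_{(2)}) = \psi(a)\varepsilon(h)$, using commutativity of $k$ to rearrange factors. The fourth map $T \mapsto T\square_L H$ is already discussed in the preliminaries.

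The heart of the matter is to construct the connecting map $\partial : \Alg_H(A,k) \to \Gal(L)$. Given $\varphi \in \Alg_H(A,k)$, I set $I_\varphi = \{a - \varphi(a)1 : a \in A^+\} \subseteq A$ and define $T_\varphi = H/HI_\varphi$; equivalently, $T_\varphi \cong H \otimes_A {}_\varphi k$. I must verify three things: (i) $HI_\varphi$ is a two-sided ideal, which uses the $H$-stability of $\varphi$ via the congruence $ah \equiv h \varphi(a) \pmod{HI_\varphi}$, obtained by rewriting $ah = h_{(1)}(S(h_{(2)})a h_{(3)})$ and applying $\varphi$ to the second factor; (ii) the right $L$-coaction on $H$, namely $h \mapsto h_{(1)}\otimes \pi(h_{(2)})$, descends to $T_\varphi$, which is straightforward since $\pi|_A = \varepsilon|_A$ yields $(a h)_{(1)} \otimes \pi((ah)_{(2)}) = a h_{(1)} \otimes \pi(h_{(2)})$; and (iii) $T_\varphi$ is an $L$-Galois object, for which I would use faithful flatness of $H$ over $A$ to show that $A \cap HI_\varphi = I_\varphi$, identify $T_\varphi^{\co L}$ with $A/I_\varphi \cong k$, and invoke the standard criterion that a faithfully flat $L$-comodule algebra with trivial coinvariants is Galois.

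I would then verify exactness at each spot. Injectivity of the first map is immediate from surjectivity of $\pi$. Exactness at $\Alg(H,k)$: $\psi\circ i = \varepsilon$ iff $\psi$ vanishes on $A^+$, iff $\psi$ vanishes on $HA^+ = \ker\pi$, iff $\psi$ factors through $\pi$. Exactness at $\Alg_H(A,k)$: if $\varphi = \psi \circ i$, then $\psi(h(a-\varphi(a)))= \psi(h)(\psi(a)-\varphi(a))=0$, so $\psi$ descends to an algebra map $T_\varphi \to k$, making $T_\varphi$ trivial; conversely, an algebra map $\phi : T_\varphi \to k$ pulled back to $H$ extends $\varphi$, since the image of $a \in A$ in $T_\varphi$ equals $\varphi(a) \cdot 1_{T_\varphi}$.

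The hard part will be exactness at $\Gal(L)$: showing that $T \square_L H$ is trivial as an $H$-Galois object iff $T \cong T_\varphi$ for some $\varphi \in \Alg_H(A,k)$. For the ``if'' direction, I would produce an explicit $H$-Galois isomorphism $T_\varphi \square_L H \xrightarrow{\sim} H$ using that the map $H \to T_\varphi \otimes H$, $h \mapsto \overline{h_{(1)}} \otimes h_{(2)}$, lands in the cotensor product and has an inverse built from $\varphi$ extended by $\mathrm{id}$. For the converse, given an algebra map $\omega : T\square_L H \to k$, the delicate point is to extract $\varphi \in \Alg_H(A,k)$. The natural candidate is $\varphi(a) = \omega(\alpha(a))$ for a suitable algebra embedding $\alpha: A \hookrightarrow T\square_L H$ coming from the bi-comodule structure (one uses that $A = H^{\co \pi}$ embeds canonically into the cotensor product); the $H$-stability of $\varphi$ should then drop out of the compatibility of $\omega$ with the $H$-coaction together with the Miyashita--Ulbrich action. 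Finally, the identification $T \cong T_\varphi$ would be obtained by lifting the projection $H \to T_\varphi$ through the isomorphism $T\square_L H \cong H$ and descending along $H \to T$ supplied by the cotensor structure, with faithful flatness again ensuring that the resulting map is an isomorphism of $L$-Galois objects.
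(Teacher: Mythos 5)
Your construction is, up to a reparametrization, the one in the paper: the paper works with the ideal $H\varphi'(A^+)$ for $\varphi'=\varphi*\mathrm{id}$, in whose quotient one has $\overline{a}=\varphi(S(a))\overline{1}$, so your $T_\varphi=H/HI_\varphi\cong H\otimes_A k_\varphi$ is exactly the paper's object attached to the convolution inverse $\varphi\circ S$; since $\Alg_H(A,k)$ is closed under convolution inversion (and $\varphi$ extends to $H$ iff $\varphi\circ S$ does), this changes nothing for exactness of the sequence of pointed sets. Your two-sided ideal argument via $ah=h_{(1)}\bigl(S(h_{(2)})ah_{(3)}\bigr)$, your treatment of exactness at $\Alg(H,k)$ and at $\Alg_H(A,k)$, and your strategy at $\Gal(L)$ (in particular extracting $\varphi(a)=\omega(1\otimes a)$ and getting $H$-stability from the Miyashita--Ulbrich action) all coincide with the paper's proof; for the ``if'' direction at $\Gal(L)$ you need not build an inverse by hand, since the colinear algebra map $H\to T_\varphi\square_L H$ is automatically an isomorphism of $H$-Galois objects.

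The one genuine gap is your step (iii). The ``standard criterion'' you invoke --- that a faithfully flat $L$-comodule algebra with trivial coinvariants is Galois --- is false as stated: over a field every nonzero algebra is faithfully flat over $k$, yet $T=k$ with the trivial coaction has $T^{\co L}=k$ and is not $L$-Galois unless $L=k$; trivial coinvariants give no control on surjectivity of the canonical map. To close this you must actually produce the inverse of $\kappa_r$. The paper does so by checking that $\pi(h)\mapsto\overline{S(h_{(1)})}\otimes\overline{h_{(2)}}$ is a well-defined map on $L$ (this is where the relation $\overline{a}=\varphi(a)\overline{1}$ for $a\in A$ enters, via $\overline{S(a_{(1)})}\otimes\overline{a_{(2)}}=\varepsilon(a)\overline{1}\otimes\overline{1}$) and assembling the translation map in the standard way. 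Alternatively one can base-change the bijection $H\otimes_A H\xrightarrow{\ \sim\ }H\otimes L$, which holds for a faithfully flat exact sequence, along $A\to k_\varphi$; but making that precise uses exactly the two-sided-ideal identities you already proved together with a further faithful-flatness argument, so it is no shorter. Your use of faithful flatness to get $A\cap HI_\varphi=I_\varphi$ is correct and is needed --- it gives $T_\varphi\neq 0$ and the computation of the coinvariants --- but it does not by itself yield the Galois property.
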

The result above is in fact a special case of what is proved in \cite{scha}. We include, for the sake of completeness, a sketch of proof (the arguments being simpler in our particular setting). 

The first two maps on the left are those induced by the given Hopf algebra maps in the obvious way, while the one on the right is also the obvious one. The key point is to describe the map $\Alg_H(A,k) \longrightarrow \Gal(L)$, a generalisation of the transgression map in group cohomology.

We fix an exact sequence as in the statement of the theorem.
For $\varphi \in \Alg_H(A,k)$, we define 
$\varphi' : A \longrightarrow H$ by 
$\varphi'(a)= \varphi(a_{(1)}) a_{(2)}$. The map $\varphi'$ is an algebra map and satisfies $ \varphi'(S(h_{(1)})a h_{(2)})=S(h_{(1)})\varphi'(a)h_{(2)}$ for any $a \in A$, $h \in H$.

\begin{lemma} \label{traright} Let $\varphi \in \Alg_H(A,k)$.
The left ideal $H\varphi'(A^+)$ is a two-sided ideal in $H$. The map
 \begin{align*}
  \rho : H / H\varphi'(A^+) &\longrightarrow H/H\varphi'(A^+) \otimes L \\
  \overline{h} &\longmapsto \overline{h_{(1)}} \otimes \pi(h_{(2)})
 \end{align*}
defines a right $L$-comodule algebra structure on $H/ H\varphi'(A^+)$ making it into a right $L$-Galois object.
\end{lemma}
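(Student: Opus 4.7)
The proof has three parts: that $H\varphi'(A^+)$ is a two-sided ideal, that $\rho$ is well-defined and gives a right $L$-comodule algebra structure on $T:=H/H\varphi'(A^+)$, and that the canonical map of $T$ is bijective. The first two parts are elementary; the Galois property is the main obstacle, and the strategy there is to derive it from the (bijective) canonical map of the Hopf-Galois extension $A\subset H$ supplied by the faithful-flatness hypothesis.

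For the ideal property, the left-ideal part is tautological, and the right-ideal claim reduces to
\begin{equation*}
\varphi'(a)h = h_{(1)} S(h_{(2)}) \varphi'(a) h_{(3)} = h_{(1)} \varphi'(S(h_{(2)}) a h_{(3)}),
\end{equation*}
the first equality being $\sum h_{(1)} S(h_{(2)}) \otimes h_{(3)} = 1 \otimes h$ and the second the stated equivariance of $\varphi'$; since $\varepsilon$ is multiplicative, the adjoint action preserves $A^+$, and the right-hand side lies in $H\varphi'(A^+)$ when $a\in A^+$. For the comodule structure, a direct computation gives $\Delta(\varphi'(a)) = \varphi'(a_{(1)}) \otimes a_{(2)}$. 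Combining this with $\pi(a_{(2)}) = \varepsilon(a_{(2)})1_L$ (valid because $A=H^{\co\pi}$), one finds that for $h=g\varphi'(a)$ with $a\in A^+$ the image $h_{(1)} \otimes \pi(h_{(2)}) = g_{(1)}\varphi'(a)\otimes \pi(g_{(2)})$ lies in $H\varphi'(A^+)\otimes L$, so $\rho$ descends to $T$. Coassociativity, counitality, and multiplicativity of $\rho$ are inherited from the corresponding properties of the map $h\mapsto h_{(1)}\otimes\pi(h_{(2)})$ on $H$, since $\Delta$ and $\pi$ are algebra maps.

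The Galois step is where the work is concentrated. Let $\beta\colon H\otimes_A H \to H\otimes L$, $h\otimes h' \mapsto hh'_{(1)}\otimes \pi(h'_{(2)})$, be the canonical map of the $L$-Hopf-Galois extension $A\subset H$, which is bijective by hypothesis. Two observations close the argument. First, since $\overline{\varphi'(a)} = \varepsilon(a)\,1_T$ in $T$ for $a\in A$, convolving with $\varphi^{-1}$ yields $\overline{a} = \varphi^{-1}(a)\,1_T$; hence the right $A$-action on $T$ defined by $\bar h\cdot a = \overline{h\varphi'(a)}$ factors through $\varepsilon$, and consequently $T\otimes_A T \simeq T\otimes T$. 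Second, setting $J=H\varphi'(A^+)$, the explicit inverse of $\beta$ (the translation map of the extension) sends $J\otimes L$ into $J\otimes_A H$ because $J$ is a left ideal of $H$, so $\beta$ restricts to a bijection $J\otimes_A H \to J\otimes L$ and, passing to the quotient, induces a bijection $T\otimes_A H \to T\otimes L$. Repeating the calculation of the previous paragraph yields $\beta(T\otimes_A J) = 0$, and by injectivity $T\otimes_A J = 0$; hence $T\otimes_A H = T\otimes_A T = T\otimes T$, and the induced bijection is precisely $\kappa_r$, which proves that $T$ is a right $L$-Galois object.
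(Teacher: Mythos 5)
Your handling of the ideal and of the comodule-algebra structure is the same as the paper's. For the Galois property you take a genuinely different route: the paper proves $\overline{S(a)}=\varphi(a)\overline{1}$ and uses it to show that $\pi(h)\mapsto \overline{S(h_{(1)})}\otimes\overline{h_{(2)}}$ is a well-defined map $L\to T\otimes T$, which is then checked to invert $\kappa_r$ ``in the standard manner''; you instead descend the canonical map $\beta\colon H\otimes_AH\to H\otimes L$ of the extension $A\subset H$ through the quotient by $J=H\varphi'(A^+)$. Your descent is sound and concentrates all use of the Galois property of $A\subset H$ in one place (note that its bijectivity is not literally a hypothesis but a standard consequence, due to Schneider, of exactness plus faithful flatness); the paper's version has the advantage of producing the explicit translation map, whose analogue reappears in Lemma \ref{traleft} and in the proof of Theorem \ref{schexact}.

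There is, however, one genuine omission: you never show that $T\neq 0$, which is part of the definition of a right $L$-Galois object and is not automatic --- if $H\varphi'(A^+)=H$ your entire descent argument goes through vacuously. This is precisely where the paper uses faithful flatness: the character $\varphi\circ S$ of $A$ kills $\varphi'(A^+)$, so $A\varphi'(A^+)$ is a proper ideal of $A$, and faithful flatness of $H$ over $A$ then forces $H\varphi'(A^+)\neq H$. Two smaller points should also be repaired. The tensor product $\otimes_A$ appearing in the descended canonical map carries the \emph{untwisted} actions $\overline{h}\cdot a=\overline{ha}$ and $a\cdot\overline{h}=\overline{ah}$, not the action $\overline{h\varphi'(a)}$ you introduce; writing $a=\varphi S(a_{(1)})\varphi'(a_{(2)})$ one finds $\overline{ha}=\varphi S(a)\,\overline{h}=\overline{ah}$, so these actions factor through the character $\varphi\circ S$ rather than through $\varepsilon$. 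The conclusion $T\otimes_AT=T\otimes T$ survives because both sides are rescaled by the \emph{same} character, but the justification as written refers to the wrong action. Finally, the containment $j\,l^{[1]}\in J$ needed for the translation map to preserve $J\otimes L$ uses that $J$ is a \emph{right} (hence two-sided) ideal, not that it is a left ideal. None of this invalidates the approach, but the non-vanishing of $T$ must be added.
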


\begin{proof}
We have
$$\varphi'(a)h = h_{(1)}S(h_{(2)})\varphi'(a)h_{(3)} = h_{(1)} \varphi'(S(h_{(2)})ah_{(3)}))$$
for any $a \in A$, $h \in H$
and hence $\varphi'(A^+)H \subset H \varphi'(A^+)$. This shows that $H\varphi'(A^+)$ is a two-sided ideal.
We have $\varphi'(A^+) \subset {\rm Ker}(\varphi \circ S)$ since for $a \in A$ we have $\varphi\circ S(\varphi'(a)) = \varepsilon(a)$, hence $\varphi'(A^+)$ is a proper ideal in $A$. A standard argument using the faithful flatness of $H$ as a right $A$-module then shows that $H\varphi'(A^+)$ is also a proper ideal of $H$, so that $H/H\varphi'(A^+)$ is a non-zero algebra. Let
\begin{align*}
  \rho_0 : H  &\longrightarrow H/H\varphi'(A^+) \otimes L \\
  h &\longmapsto \overline{h_{(1)}} \otimes \pi(h_{(2)})
 \end{align*}
This is an algebra map and for $a \in A^+$, we have
\begin{align*}
 \rho_0(\varphi'(a)) & = \overline{\varphi'(a)_{(1)}} \otimes \pi(\varphi'(a)_{(2)}) 
  =  \overline{\varphi'(a_{(1)})} \otimes \pi(a_{(2)}) 
 =  \overline{\varepsilon(a_{(1)})} \otimes \pi(a_{(2)}) = \overline{1} \otimes \pi(a) =0 
\end{align*}
since for $a \in A$, $\overline{\varphi'(a)}= \varepsilon(a)\overline{1}$. This shows that
$\rho_0$ induces the announced algebra map, which is clearly co-associative, and we  get our comodule algebra structure. 

For $a \in A$, we have
$S(a) = \varphi'(\varphi(a_{(2)})S(a_{(1)}))$, and hence 
$\overline{S(a)}=\varphi(a)\overline{1}$.  We deduce that
$$\overline{S(a_{(1)})} \otimes \overline{a_{(2)}} =  \overline{1} \otimes \overline{\varphi(a_1)a_2} =
\varepsilon(a) \overline{1} \otimes \overline{1} $$
This identity shows that we have a map
\begin{align*}
L &\longrightarrow H/H\varphi'(A^+) \otimes H/H\varphi'(A^+) \\
\pi(h) &\longmapsto \overline{S(h_{(1)})} \otimes \overline{h_{(2)}}
\end{align*}
that enables us to construct an inverse for the canonical map in a standard manner, and we are done.
\end{proof}

\begin{proof}[Proof of Theorem \ref{schexact} (sketch)]
 We first examine  the exactness at $\Alg_H(A,k)$.
Let $\varphi \in \Alg(H,k)$. 
 For $a \in A$ we have $\varphi\circ S(\varphi'(a)) = \varepsilon(a)$.
 This shows that $\varphi \circ S : H \longrightarrow k$ induces an algebra map
 $H/ H\varphi'(A^+) \longrightarrow k$ and hence the $L$-Galois object
 $H/ H\varphi'(A^+)$ is trivial.
 
 Conversely, let $\varphi \in \Alg_H(A,k)$ be such that $H/ H\varphi'(A^+)$ is trivial: there exists an algebra map
 $\psi_0 : H/ H\varphi'(A^+) \longrightarrow k$. Define $\psi : H \longrightarrow k$ by $\psi(h) = \psi_0(\overline{S(h)})$. For $a\in A$, we have $\psi(a) = \psi_0(\overline{S(a)})= \psi_0(\varphi(a)\overline{1})
 =\varphi(a)$, and hence one can extend $\varphi$ to $H$.

We now check exactness at ${\rm Gal}(L)$. Let $\varphi \in \Alg_H(A,k)$. The map 
 \begin{align*}
  H &\longrightarrow H/ H\varphi'(A^+) \square_LH \\
  h &\longmapsto \overline{h_{(1)}} \otimes h_{(2)}
 \end{align*}
is an $H$-colinear algebra map between two $H$-Galois objects, and hence must be an isomorphism.

Conversely, Let $Z$ be right $L$-Galois object such that $Z \square_LH$ is trivial as right $H$-Galois object. Fix an $H$-comodule algebra isomorphism
\begin{align*}
\Phi : H &\longrightarrow Z\square_L H \\
 h &\longmapsto h^{(0)} \otimes h^{(1)}
\end{align*}
 The inverse has the form
 \begin{align*}
  Z \square_L H &\longrightarrow H \\
  z \otimes h &\longmapsto \psi(z \otimes h_{(1)})h_{(2)}
 \end{align*}
for an algebra map $\psi :  Z \square_L H \longrightarrow k$.
For $a \in A$, we have $1 \otimes a \in Z \square_L H$, so we define an algebra map
$\varphi : A \longrightarrow k$ by letting $\varphi(a) = \psi(1 \otimes a)$.
That $\varphi$ satisfies $\varphi(S(h_{(1)})ah_{(2)})=\varepsilon(h)\varphi(a)$ for $h \in H$ and $a \in A$ follows from the fact that an isomorphism between Galois objects automatically commutes with the Miyashita-Ulbrich actions (see  Subsection 2.2 for the Miyashita-Ulbrich action on $T\square_LH$). Consider now the algebra map
\begin{align*}
 f : H &\longrightarrow Z \\
 h &\longmapsto \varepsilon(h^{(1)}) h^{(0)}
\end{align*}
For $a \in A$ we have 
$$1 \otimes a = \psi(1\otimes a_{(1)}) (a_{(2)})^{(0)} \otimes (a_{(2)})^{(1)}$$ 
and hence $\varepsilon(a) = \psi(1\otimes a_{(1)}) (a_{(2)})^{(0)} \varepsilon ((a_{(2)})^{(1)})$. We get that
$$f(\varphi'(a)) = f(\psi(1\otimes a_{(1)})a_{(2)}) = \psi(1\otimes a_{(1)})(a_{(2)})^{(0)}\varepsilon((a_{(2)})^{(1)})=\varepsilon(a)1$$
and hence $f$ induces an algebra map $H/H\varphi'(A^+) \longrightarrow Z$. One checks that this is $L$-colinear
by using that $\Phi$ is $H$-colinear and has its values in $Z \square_LH$:
$$h^{(0)} \otimes (h^{(1)})_{(1)} \otimes (h^{(1)})_{(2)} = (h_{(1)})^{(0)} \otimes (h_{(1)})^{(1)} \otimes h_{(2)}$$
$$(h^{(0)})_{(0)} \otimes (h_{(0)})_{(1)} \otimes h^{(1)} = h^{(0)} \otimes \pi((h^{(1)})_{(1)}) \otimes (h^{(1)})_{(2)}$$  
We conclude that we have an isomorphism.
\end{proof}

\begin{remark}
{\rm  G\"unther \cite{gun} has provided a generalization of Schauenburg's exact sequence.}
\end{remark}

We now apply Schauenburg's exact sequence to bi-Galois objects. The basic observation is as follows.

\begin{lemma}\label{traleft}
 Let $\varphi \in \Alg_H(A,k)$. The map
 \begin{align*}
  \lambda : H / H\varphi'(A^+) &\longrightarrow L \otimes H/H\varphi'(A^+)  \\
  \overline{h} &\longmapsto \pi(h_{(1)}) \otimes \overline{h_{(2)}}
 \end{align*}
defines a left $L$-comodule algebra structure on $H/ H\varphi'(A^+)$ making it into a left $L$-Galois object, and hence an $L$-bi-Galois object for the right $L$-Galois structure in Lemma \ref{traright}.
\end{lemma}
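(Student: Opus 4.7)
The plan is to verify in sequence that $(a)$ $\lambda$ is well defined, $(b)$ it gives $T := H/H\varphi'(A^+)$ the structure of a left $L$-comodule algebra, $(c)$ it is compatible with the right coaction $\rho$ of Lemma \ref{traright}, so that $T$ becomes an $L$-bicomodule algebra, and $(d)$ the associated canonical map $\kappa_l$ is bijective. Steps $(b)$ and $(c)$ are formal consequences of $\pi$ being a Hopf algebra map and $\Delta$ being coassociative and multiplicative, so the real work lies in $(a)$ and $(d)$, both of which I would model on the proof of Lemma \ref{traright}.

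For $(a)$, I would exploit the two-sided equality $H\varphi'(A^+) = \varphi'(A^+)H$ established inside the proof of Lemma \ref{traright}, and check that the lift $\lambda_0 : H \to L \otimes T$, $h \mapsto \pi(h_{(1)}) \otimes \overline{h_{(2)}}$, vanishes on $\varphi'(A^+) H$. Using $\Delta(\varphi'(a)) = \varphi'(a_{(1)}) \otimes a_{(2)}$ together with $\pi \circ \varphi' = \varphi \cdot 1$ (the latter coming from $\pi i = \varepsilon 1$), for $a \in A^+$ and $h \in H$ one gets
\[
\lambda_0(\varphi'(a) h) = \varphi(a_{(1)})\, \pi(h_{(1)}) \otimes \overline{a_{(2)} h_{(2)}} = \pi(h_{(1)}) \otimes \overline{\varphi'(a) h_{(2)}} = 0,
\]
the last equality because $\varphi'(a) h_{(2)} \in \varphi'(A^+) H$. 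The bicomodule identity in $(c)$ then reduces to coassociativity of $\Delta$.

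For $(d)$, following the end of the proof of Lemma \ref{traright}, I would first produce a well-defined map $L \to T \otimes T$ by $\pi(h) \mapsto \overline{h_{(1)}} \otimes \overline{S(h_{(2)})}$, and use it to define $\kappa_l^{-1}(\pi(h) \otimes \overline{h'}) := \overline{h_{(1)}} \otimes \overline{S(h_{(2)}) h'}$. The main obstacle is well-definedness of this formula, and the key tool I would invoke is the identity $\overline{a} = \varphi^{-1}(a)\overline{1}$ for $a \in A$, obtained by writing $\overline{a} = \varepsilon(a_{(1)}) \overline{a_{(2)}} = \varphi^{-1}(a_{(1)}) \varphi(a_{(2)}) \overline{a_{(3)}}$ and invoking the relation $\varphi(b_{(1)}) \overline{b_{(2)}} = \varepsilon(b)\overline{1}$ from the proof of Lemma \ref{traright}. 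This identity immediately yields $\overline{a y} = \overline{y a} = \varphi^{-1}(a) \overline{y}$ for any $y \in H$; applied to $h = x a \in HA^+ = \Ker(\pi)$ with $a \in A^+$, it gives
\[
\overline{x_{(1)} a_{(1)}} \otimes \overline{S(a_{(2)}) S(x_{(2)})} = \varphi^{-1}\!\bigl(a_{(1)} S(a_{(2)})\bigr)\, \overline{x_{(1)}} \otimes \overline{S(x_{(2)})} = \varepsilon(a)\, \overline{x_{(1)}} \otimes \overline{S(x_{(2)})} = 0,
\]
so $\kappa_l^{-1}$ factors through $L \otimes T$. The verifications $\kappa_l \kappa_l^{-1} = \id$ and $\kappa_l^{-1} \kappa_l = \id$ are then a routine antipode computation.
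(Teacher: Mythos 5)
Your proposal follows essentially the same route as the paper: lift $\lambda$ to an algebra map $\lambda_0$ on $H$, check it kills the defining ideal, observe that the comodule and bicomodule axioms are formal, and invert $\kappa_l$ via the map $\pi(h)\mapsto \overline{h_{(1)}}\otimes\overline{S(h_{(2)})}$, whose well-definedness rests on the identity $\overline{a}=\varphi^{-1}(a)\overline{1}$ for $a\in A$ (the paper packages this as $\overline{S(a)}=\varphi(a)\overline{1}$ and $\overline{a_{(1)}}\varphi(a_{(2)})=\varepsilon(a)\overline{1}$, which is the same content). One inaccuracy: the proof of Lemma \ref{traright} establishes only the inclusion $\varphi'(A^+)H\subset H\varphi'(A^+)$, not the equality $H\varphi'(A^+)=\varphi'(A^+)H$ that you invoke in step $(a)$, so as written your verification that $\lambda_0$ vanishes on $\varphi'(A^+)H$ does not by itself cover the whole ideal $H\varphi'(A^+)$. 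This is harmless: since $\lambda_0$ is an algebra map, it suffices to check $\lambda_0(\varphi'(a))=0$ for $a\in A^+$ (the case $h=1$ of your computation), and then $\lambda_0(h\varphi'(a))=\lambda_0(h)\lambda_0(\varphi'(a))=0$ kills the left ideal directly; this is exactly how the paper argues in Lemma \ref{traright}. With that one-line repair the argument is complete and matches the paper's.
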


\begin{proof}
We begin with the algebra map 
\begin{align*}
  \lambda_0 : H / H\varphi'(A^+) &\longrightarrow L \otimes H/H\varphi'(A^+)  \\
  h &\longmapsto \pi(h_{(1)}) \otimes \overline{h_{(2)}}\end{align*}
Similarly to the proof of lemma \ref{traright}, it induces the announced algebra map, which is clearly co-associative, and turns $H/H\varphi'(A^+)$ into a left $L$-comodule algebra, and into an $L$-$L$-bicomodule algebra. It remains to check the left Galois property. For $a \in A$, we have, in $H/H \varphi'(A^+)$
$$\overline{a_{(1)}\varphi(a_{(2)})} = \overline{a_{(1)}\varphi(a_{(2)}) a_{(3)}S(a_{(4)})}=
\overline{a_{(1)}\varepsilon(a_{(2)})S(a_{(3)})}=\varepsilon(a)\overline{1}$$
Then similarly to the proof in Lemma \ref{traright}, we have, for $a \in A$, 
$$\overline{a_{(1)}} \otimes \overline{S(a_{(2)})}=\varepsilon(a) \overline{1} \otimes \overline{1}$$
and we get a map
\begin{align*}
L &\longrightarrow H/H\varphi'(A^+) \otimes H/H\varphi'(A^+) \\
\pi(h) &\longmapsto \overline{h_{(1)}} \otimes \overline{S(h_{(2)})}
\end{align*}
that enables us to construct an inverse for the canonical map in a standard manner.
\end{proof}

\begin{remark}
{\rm That the right $L$-Galois object  $H / H\varphi'(A^+)$ is an $L$-bi-Galois object can also be deduced from the proof of Theorem 2 in \cite{mas}.}
\end{remark}

\begin{corollary}\label{corobigal}
 Let \begin{equation*}k\to A \overset{i}\to H \overset{\pi}\to L \to k
\end{equation*} be an exact sequence of Hopf algebras with $H$ faithfully flat as a right $A$-module. There exists a group morphism 
$$\mathcal T : \Alg_H(A,k) \longrightarrow {\rm BiGal}(L)$$
whose kernel consist of elements $\varphi \in \Alg_H(A,k)$ that have an extension $\tilde{\varphi} \in \Alg(H,k)$ and satisfy $\tilde{\varphi}(h_{(1)}) \pi(h_{(2)})= \tilde{\varphi}(h_{(2)}) \pi(h_{(1)})$ for any $h \in H$.

If moreover  ${\rm Cleft}(L)= {\rm Gal}(L)$ and ${\rm Cleft}(H)$ is trivial, then any $L$-bi-Galois object is isomorphic to ${^f \!\mathcal T}(\varphi)$ for some $\varphi \in \Alg_H(A,k)$ and $f \in {\rm Aut}_{\rm Hopf}(L)$.
\end{corollary}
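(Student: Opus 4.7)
The plan is to combine Lemma \ref{traleft} with Schauenburg's exact sequence (Theorem \ref{schexact}) and the structural bi-Galois results recalled in Section 2.2. Define $\mathcal{T}(\varphi) := [H/H\varphi'(A^+)]$, which by Lemma \ref{traleft} is an $L$-bi-Galois object; the neutral element is handled by $\mathcal{T}(\varepsilon) = [H/HA^+] = [L]$. For multiplicativity, I take as candidate iso $\mathcal{T}(\varphi * \psi) \to \mathcal{T}(\varphi) \square_L \mathcal{T}(\psi)$ the map $\overline{h} \mapsto \overline{h_{(1)}} \otimes \overline{h_{(2)}}$, induced from the algebra map $H \to H/H\varphi'(A^+) \otimes H/H\psi'(A^+)$ given by the same formula. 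A direct cotensor-relation check shows that the image lies inside the cotensor product, and bicolinearity is immediate. The main obstacle --- the principal computational step of the whole proof --- is to verify that this map vanishes on $H(\varphi * \psi)'(A^+)$; this reduces, after iterated-coassociativity Sweedler manipulations, to the basic quotient relation $\overline{\varphi'(x)} = \varepsilon(x)\overline{1}$. Once well-defined, the map is a morphism of $L$-Galois objects and is therefore automatically an isomorphism.

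For the kernel, $\varphi$ lies in $\ker\mathcal{T}$ iff there exists an algebra map $\phi : H/H\varphi'(A^+) \to k$ such that the right-Galois trivialization $\overline{h} \mapsto \phi(\overline{h_{(1)}})\pi(h_{(2)})$ is also left $L$-colinear; writing $\tilde{\phi} : H \to k$ for the pullback of $\phi$ along the quotient, this unwinds to $\pi(h_{(1)}) \tilde{\phi}(h_{(2)}) = \tilde{\phi}(h_{(1)}) \pi(h_{(2)})$ in $L$. Setting $\tilde{\varphi} := \tilde{\phi} \circ S$, the identity $\overline{S(a)} = \varphi(a)\overline{1}$ from the proof of Lemma \ref{traright} gives $\tilde{\varphi}(a) = \varphi(a)$, so $\tilde{\varphi}$ extends $\varphi$; the commutation with $\pi$ passes from $\tilde{\phi}$ to its convolution inverse $\tilde{\varphi}$ by the elementary fact that an invertible element and its inverse commute with exactly the same things in any monoid. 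Conversely, given $\tilde{\varphi} \in \Alg(H,k)$ extending $\varphi$ and satisfying the commutation, the algebra map $\tilde{\phi} := \tilde{\varphi} \circ S$ descends to the quotient because $\tilde{\phi}(\varphi'(a)) = \varphi(a_{(1)}) \tilde{\varphi}(S(a_{(2)})) = \varphi(a_{(1)} S(a_{(2)})) = \varepsilon(a)$.

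For the final surjectivity assertion, let $T$ be an $L$-bi-Galois object. The hypothesis ${\rm Cleft}(L) = {\rm Gal}(L)$ forces $T$ to be $L$-cleft; then $T \square_L H$ is an $H$-cleft Galois object by the cleft-preservation item in Section 2.2, hence trivial since ${\rm Cleft}(H)$ is trivial. Exactness of Theorem \ref{schexact} at ${\rm Gal}(L)$ then produces $\varphi \in \Alg_H(A,k)$ with $\mathcal{T}(\varphi) \simeq T$ as right $L$-Galois objects, and Lemma 3.11 of \cite{sc1} upgrades this to a bi-Galois isomorphism $T \simeq {}^f\!\mathcal{T}(\varphi)$ for some Hopf automorphism $f$ of $L$.
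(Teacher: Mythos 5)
Your overall strategy is the same as the paper's, and your kernel and surjectivity arguments are essentially sound. But the central computational step --- the one you yourself single out as ``the main obstacle'' and then do not actually carry out --- fails as you have set it up. The map $\overline{h}\mapsto\overline{h_{(1)}}\otimes\overline{h_{(2)}}$ does \emph{not} descend to a map $T_{\varphi*\psi}\to T_\varphi\square_L T_\psi$ in general. The image of $(\varphi*\psi)'(a)=\varphi(a_{(1)})\psi(a_{(2)})a_{(3)}$ is $\varphi(a_{(1)})\psi(a_{(2)})\,\overline{a_{(3)}}\otimes\overline{a_{(4)}}$, and the quotient relation $\varphi(b_{(1)})\overline{b_{(2)}}=\varepsilon(b)\overline{1}$ in $T_\varphi$ can only contract \emph{adjacent} Sweedler legs: here $\varphi$ sits on leg $1$ while the $T_\varphi$-class sits on leg $3$, with $\psi$ occupying leg $2$, and no amount of coassociativity fixes this. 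Concretely, writing $u=\varphi 1$, $v=\psi 1$ in the convolution monoid $\Hom(A,T_\varphi\otimes T_\psi)$, with respective inverses $p_\varphi\otimes 1$ and $1\otimes p_\psi$ ($p_\varphi,p_\psi$ the quotient maps), the element above is $(u*v*u^{-1}*v^{-1})(a)$, which equals $\varepsilon(a)\overline{1}\otimes\overline{1}$ only when $\varphi*\psi=\psi*\varphi$. What the computation does prove (and what the paper proves) is $T_{\psi*\varphi}\simeq T_\varphi\square_L T_\psi$: there $\varphi$ and the $T_\varphi$-class occupy the adjacent legs $2,3$, and then $\psi$ and the $T_\psi$-class occupy the adjacent legs $1,2$.

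Consequently $\varphi\mapsto[T_\varphi]$ is a group \emph{anti}-homomorphism, and your $\mathcal T$ is not a group morphism whenever $\Alg_H(A,k)$ is non-abelian --- which is precisely the situation in the paper's application, where $\Alg_H(A,k)\simeq{\rm SL}(n)$. The fix is the one the paper adopts: set $\mathcal T(\varphi)=[T_{\varphi^{-1}}]$. Your kernel description survives this change verbatim, since the set of $\varphi$ admitting an extension commuting with $\pi$ is stable under convolution inversion via $\tilde{\varphi}\mapsto\tilde{\varphi}\circ S$ (essentially the argument you already give; the paper's choice even makes the extension come out directly, without composing with $S$), and the surjectivity argument is unaffected because $T_\varphi=\mathcal T(\varphi^{-1})$ is still in the image up to twisting by an automorphism.
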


\begin{proof}
 For $\varphi \in \Alg_H(A,k)$, denote by $T_\varphi$ the $L$-bi-Galois objects $H/H\varphi'(A^+)$ from Lemmas \ref{traright} and \ref{traleft}. For $\varphi, \psi \in \Alg_H(A,k)$, We have a map
\begin{align*}
 H  & \longrightarrow T_{\varphi} \square_L T_{\psi}\\
h & \longmapsto \overline{h_{(1)}} \otimes \overline{h_{(2)}}
\end{align*}
If $a \in A$, this maps send $\psi \varphi(a_{(1)})a_{(2)}= \psi(a_{(1)})\varphi(a_{(2)})a_{(3)}$ to
$$\psi(a_{(1)})\varphi(a_{(2)})\overline{a_{(3)}}\otimes \overline{a_{(4)}}= \psi(a_{(1)})\varepsilon(a_{(2)})\overline{1}\otimes \overline{a_{(3)}}= \varepsilon(a) \overline{1}\otimes \overline{1}$$
and therefore induces an algebra map 
$$ T_{\psi \varphi} \longrightarrow T_{\varphi} \square_L T_{\psi}$$
This algebra map is $L$-bicolinear, so is an isomorphism of $L$-bi-Galois objects. We get the announced group morphism 
\begin{align*}
 \Alg_H(A,k) &\longrightarrow {\rm BiGal}(L) \\ \varphi  & \longmapsto [T_{\varphi^{-1}}] 
\end{align*}
If $\varphi\in \Alg_H(A,k)$, then $\varphi$ is in the kernel if and only if there exists an algebra map $\psi : H/H (\varphi^{-1})'(A^+)\rightarrow k$ that
$\psi(\overline{h_{(1)}}) \pi(h_{(2)})=\psi(\overline{h_{(2)}}) \pi(h_{(1)})$ for any $h \in H$, if and only if there exists an algebra map $\psi_0 : H \rightarrow k$ such that $\varphi^{-1}(a_{(1)})\psi_0(a_{(2)})=\varepsilon(a)$ and 
$\psi_0(h_{(1)}) \pi(h_{(2)})=\psi_0(h_{(2)}) \pi(h_{(1)})$ for any $a \in A$ and $h \in H$. The first condition is equivalent to say that $\psi_0$ extends $\varphi$, and therefore the assertion about the kernel is proved.

If ${\rm Cleft}(L)= {\rm Gal}(L)$ and ${\rm Cleft}(H)$ is trivial, the map ${\rm Gal}(L) \rightarrow {\rm Gal}(H)$ induced by $\pi$ is then trivial, and hence if $T$ is a left $L$-bi-Galois object, Theorem \ref{schexact} ensures that
there exists $\varphi \in {\rm Alg}_H(A,k)$ such that $T \simeq T_\varphi=H/H\varphi'(A^+)$ as right $L$-Galois object. Then there exists $f$, a Hopf algebra automorphism of $L$ such that $T\simeq {^f\! T}_\varphi$ as $L$-bi-Galois objects.  This concludes the proof.
\end{proof}

\begin{remark}
{\rm The last conclusion in the previous result holds as soon as the map ${\rm Gal}(L) \rightarrow {\rm Gal}(H)$ induced by $\pi$ is then trivial.}
\end{remark}

\section{Application to $u_q(sl(n))^*$}

For $q \in k^*$, we denote by $x_{ij}$, $1\leq i,j\leq n$, the standard generators of $\mathcal O({\rm SL}_q(n))$, the coordinate algebra of the quantum group ${\rm SL}_q(n)$. We assume that $q$ is a root of unity of order $N>1$. Then there exists an injective Hopf algebra map 
\begin{align*}
 i : \mathcal O({\rm SL}(n)) &\longrightarrow \mathcal O({\rm SL}_q(n)) \\
x_{ij} &\longmapsto x_{ij}^N
\end{align*}
whose image is central. The corresponding Hopf algebra quotient 
$$\mathcal O({\rm SL}_q(n)_{1})= \mathcal O({\rm SL}_q(n))/\mathcal O({\rm SL}_q(n))\mathcal O({\rm SL}(n))^+$$
is then the quotient of $\mathcal O({\rm SL}_q(n))$ by the ideal generated by the elements $x_{ij}^N-\delta_{ij}$, $1\leq i,j \leq n$, see \cite{pw, tak}, and is known as the coordinate algebra of  the Frobenius-Lusztig Kernel of ${\rm SL}(n)$. It is known that $\mathcal O({\rm SL}_q(n)_{1})\simeq u_q(sl(n))^*$ (see \cite{tak}), and we freely write 
this isomorphism as an equality. 

We thus have a central sequence of Hopf algebras
\begin{equation*}k \to  \mathcal O({\rm SL}(n)) \overset{i}\to \mathcal O({\rm SL}_q(n)) \overset{\pi}\to   u_q(sl(n))^* \to k\end{equation*}
with $\mathcal O({\rm SL}_q(n))$ faithfully flat (since central) as an $\mathcal O({\rm SL}(n))$-module (in fact it is a free module, see e.g. III.7.11 in \cite{bg}), and hence this is an exact sequence.
Thus we can use the considerations in the previous section. The centrality of $ \mathcal O({\rm SL}(n))$ ensures that
 $\Alg_{\mathcal O({\rm SL}_q(n))}( \mathcal O({\rm SL}(n)),k)= \Alg( \mathcal O({\rm SL}(n)),k)\simeq {\rm SL}(n)$.
Therefore Corollary \ref{corobigal} yields a group morphism
\begin{align*}
\mathcal T : {\rm SL}(n) & \longrightarrow {\rm BiGal}(u_q(sl(n))^*)\\
g & \longmapsto [T_g]
\end{align*}
For $g=(g_{ij}) \in {\rm SL}(n)$, it is straightforward to see that the algebra $T_g$ is the quotient of  $\mathcal O({\rm SL}_q(n))$ by the ideal generated by the elements $x_{ij}^N-g_{ij}$, and that its $u_q(sl(n))^*$-bicomodule algebra structure is given by
\begin{align*}
 \rho : T_g &\longrightarrow T_g \otimes u_q(sl(n))^*    \quad &\lambda :  T_g &\longrightarrow u_q(sl(n))^* \otimes T_g \\
x_{ij}& \longmapsto \sum_k x_{ik}\otimes x_{kj} \quad     &  x_{ij} &\longmapsto \sum_k x_{ik}\otimes x_{kj}
\end{align*}
where, with a slight abuse of notation, we still denote $x_{ij}$ the generators in all the considered quotients of $\mathcal O({\rm SL}_q(n))$.
An element $g \in {\rm SL}(n)$ has an extension to an algebra map on   $\mathcal O({\rm SL}_q(n))$ if and only if it is diagonal, and will satisfy the other condition defining the kernel of our map in Corollary \ref{corobigal} if and only it is a scalar matrix (the elements $x_{ij}$ being linearly independent in $u_q(sl(n))^*$). Therefore we get the embedding ${\rm PSL}(n) \hookrightarrow {\rm BiGal}(u_q(sl(n))^*)$ of Theorem \ref{main}.

To prove the assertion at $n=2$ in Theorem \ref{main}, we need some more considerations. Recall that for $r=(r_1, \ldots , r_n)\in (k^*)^n$ with $r_1 \cdots r_n=1$, we have a Hopf algebra automorphism of $\mathcal O({\rm SL}_q(n))$ defined by $x_{ij} \mapsto r_{i}^{-1}r_{j}x_{ij}$. It is straightforward to see that this automorphism induces an automorphism of $u_q(sl(n))^*$, denoted $f_r$.

\begin{lemma}\label{trans}
 For any $g$ in ${\rm SL}(n)$ and $r=(r_1, \ldots , r_n)\in (k^*)^n$ with $r_1\cdots r_n=1$, there exists $g'\in {\rm SL}(n)$ such that ${^{f_{r}}\! T}_g\simeq T_{g'}$ as   $u_q(sl(n))^*$-bi-Galois objects.
\end{lemma}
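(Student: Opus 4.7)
The plan is to exhibit an explicit bicomodule algebra isomorphism between ${}^{f_r}T_g$ and $T_{g'}$ for a suitable $g'$, obtained by descending an algebra automorphism of $\mathcal{O}({\rm SL}_q(n))$ that scales only by the row index.

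First I define a linear map $\Psi : \mathcal{O}({\rm SL}_q(n)) \to \mathcal{O}({\rm SL}_q(n))$ by $\Psi(x_{ij}) = r_i^{-1} x_{ij}$. All the quadratic Manin-type relations defining $\mathcal{O}({\rm SL}_q(n))$ are bihomogeneous in the row indices and are therefore preserved by $\Psi$; the quantum determinant relation $\det_q = 1$ is preserved precisely because $r_1 \cdots r_n = 1$. Thus $\Psi$ is an algebra automorphism (it is not a Hopf algebra map, and does not need to be).

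Next I set $g'_{ij} := r_i^N g_{ij}$. Then $\det(g') = (r_1 \cdots r_n)^N \det(g) = 1$, so $g' \in {\rm SL}(n)$, and since
\[\Psi(x_{ij}^N - g_{ij}) = r_i^{-N} x_{ij}^N - g_{ij} = r_i^{-N}(x_{ij}^N - g'_{ij}),\]
the map $\Psi$ descends to a well-defined algebra isomorphism $\bar\Psi : T_g \to T_{g'}$.

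It remains to check that $\bar\Psi$ is bicolinear from ${}^{f_r}T_g$ to $T_{g'}$. Since $\Psi$ scales only according to the row index, the right coaction is preserved on generators: both $(\bar\Psi \otimes \mathrm{id}) \circ \rho_{T_g}$ and $\rho_{T_{g'}} \circ \bar\Psi$ send $x_{ij}$ to $r_i^{-1} \sum_k x_{ik} \otimes x_{kj}$. For the left coaction, using $f_r(x_{ik}) = r_i^{-1} r_k x_{ik}$, one computes
\[(\mathrm{id} \otimes \bar\Psi)({}^{f_r}\lambda(x_{ij})) = \sum_k r_i^{-1} r_k x_{ik} \otimes r_k^{-1} x_{kj} = r_i^{-1} \sum_k x_{ik} \otimes x_{kj} = \lambda_{T_{g'}}(\bar\Psi(x_{ij})),\]
the cancellation of the $r_k$ factors being exactly what dictates the choice $\Psi(x_{ij}) = r_i^{-1} x_{ij}$ (rather than, say, the naive lift $x_{ij} \mapsto r_i^{-1} r_j x_{ij}$ of $f_r$, which fails to be right colinear on the quotient). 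Hence $\bar\Psi$ is a bicomodule algebra isomorphism ${}^{f_r}T_g \simeq T_{g'}$. The only conceptual step is identifying the correct form of $\Psi$; all subsequent verifications are routine calculations on generators.
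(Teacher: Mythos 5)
Your proof is correct and is essentially the paper's argument: you use the same $g'_{ij}=r_i^N g_{ij}$ and the same scaling map (the paper writes the inverse isomorphism $T_{g'}\to {}^{f_r}T_g$, $x_{ij}\mapsto r_i x_{ij}$, leaving the verifications as ``straightforward''). Your additional details --- checking that the row-scaling preserves the defining relations and the quantum determinant, and the explicit bicolinearity computation --- are exactly the omitted routine checks.
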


\begin{proof}
 Define $g'$ by $g'_{ij}= r_i^Ng_{ij}$. It is a straightforward verification to check that there exists an algebra map
\begin{align*}
 T_{g'} & \longrightarrow{^{f_{r}}\! T}_{g} \\
x_{ij}  & \longmapsto r_ix_{ij}
\end{align*}
which is bicolinear, hence an isomorphism.
\end{proof}

Assume now that $n=2$. For $r\in k^*$, we denote again by $r$ the element $(r,r^{-1})$ in $(k^*)^2$, and $f_r$ the corresponding automorphism of $u_q(sl(2))^*$. The following lemma is probably well-known.

\begin{lemma}
 Any Hopf algebra automorphism of $u_q(sl(2))^*$ is of the form $f_r$ for some $r \in k^*$, and is co-inner if and only if $r^N=1$.
\end{lemma}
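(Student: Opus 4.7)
The plan is to exploit the duality $(u_q(sl(2))^*)^*\cong u_q(sl(2))$ and determine the Hopf automorphism group of $u_q(sl(2))$ directly in terms of the Chevalley generators $E,F,K$.

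First I would compute ${\rm Aut}_{\rm Hopf}(u_q(sl(2)))$. Let $g$ be such an automorphism. Since $g(K)$ is grouplike and the grouplikes form the cyclic group $\langle K\rangle$ of order $N$, we have $g(K)=K^m$ for some $m$ coprime to $N$. Now $g$ sends the $2$-dimensional skew-primitive space $P_{K,1}=kE+k(K-1)$ injectively into $P_{K^m,1}$. A standard fact about the small quantum group is that the only non-trivial skew-primitive direction (apart from the diagonal subalgebra $k\langle K\rangle$) is along $E$, so $P_{K^m,1}=k(K^m-1)$ whenever $m\not\equiv 1\pmod N$, which is $1$-dimensional, forcing $m=1$. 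Then $g(E)=\lambda E+\mu(K-1)$, and the relation $KEK^{-1}=q^2E$ together with $q^2\neq 1$ (since $N$ is odd and $>1$) forces $\mu=0$, so $g(E)=\lambda E$ with $\lambda\in k^*$. Symmetrically $g(F)=\lambda'F$, and $[E,F]=(K-K^{-1})/(q-q^{-1})$ combined with $g(K)=K$ forces $\lambda\lambda'=1$. Hence ${\rm Aut}_{\rm Hopf}(u_q(sl(2)))\cong k^*$, parametrised by $\lambda$.

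Dualising, every Hopf automorphism of $u_q(sl(2))^*$ is specified by a single $\lambda\in k^*$. A direct pairing computation using $\langle x_{12},E\rangle=1$ identifies $f_r$ with the dual of the automorphism having $\lambda=r^{-2}$. Since $r\mapsto r^{-2}$ is surjective onto $k^*$, every Hopf automorphism of $u_q(sl(2))^*$ is of the form $f_r$; the kernel $\{\pm 1\}$ of the map $r\mapsto r^{-2}$ accounts for the redundancy $f_r=f_{-r}$.

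For the co-inner statement, I would use that algebra maps $\phi:u_q(sl(2))^*\to k$ correspond under duality to grouplikes of $u_q(sl(2))$, i.e.\ to $K^j$ for $0\leq j<N$. Taking $\phi={\rm ev}_{K^j}$ and computing $\phi*{\rm id}*\phi^{-1}$ on the matrix-coefficient generators $x_{ij}$ gives the automorphism $f_r$ with $r^{-2}=q^{2j}$. As $j$ varies, $q^{2j}$ ranges over $\mu_N$ (since $N$ is odd, $q^2$ generates $\mu_N$). Hence $f_r$ is co-inner exactly when $r^{-2}\in\mu_N$; using the equality $f_r=f_{-r}$ to select the appropriate representative of the parameter, this becomes the condition $r^N=1$.

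The main obstacle is the skew-primitive calculation in $u_q(sl(2))$ pinning down $\dim P_{K^m,1}$ for $m\not\equiv 1$; this is well known for the small quantum group but requires either an explicit PBW computation of the coproduct or an appeal to the structure theory of pointed Hopf algebras. Once this is in place, the rest consists of routine verifications from the defining relations.
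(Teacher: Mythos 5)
Your argument is correct in substance but takes a genuinely different route from the paper's. The paper stays on the corepresentation side: since $u_q(sl(2))^*$ has exactly $N$ simple comodules, one in each dimension $1,\dots,N$, the tensor auto-equivalence induced by an automorphism $f$ must fix each of them, so $f$ conjugates the multiplicative matrix $(x_{ij})$ by an invertible $P$, which is then shown to be diagonal (as in Theorem 5.3 of \cite{bi1}), giving $f=f_r$; the co-inner criterion is left as immediate. You instead dualize and compute ${\rm Aut}_{\rm Hopf}(u_q(sl(2)))$ on the Chevalley generators, which is more elementary and self-contained (no appeal to the classification of simple comodules), at the price of a skew-primitive analysis. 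Your treatment of the co-inner statement, identifying ${\rm Alg}(u_q(sl(2))^*,k)$ with $\langle K\rangle$ and unwinding $r^{-2}\in\mu_N\Leftrightarrow r^{2N}=1$ together with $f_r=f_{-r}$ and $N$ odd, is a correct and more explicit version of what the paper leaves implicit.

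There is one concrete error to repair. The skew-primitive space you call $P_{K,1}$ is not $kE+k(K-1)$: it is three-dimensional. In either standard convention for the coproducts, the product of $F$ with the appropriate power of $K$ (e.g.\ $FK$ when $\Delta(E)=E\otimes K+1\otimes E$ and $\Delta(F)=F\otimes 1+K^{-1}\otimes F$, since then $\Delta(FK)=FK\otimes K+1\otimes FK$) lies in the same skew-primitive space as $E$. The statement you actually need for forcing $m=1$ is that every non-trivial skew-primitive of $u_q(sl(2))$ has ``type'' $K$, i.e.\ $P_{1,K^m}=k(1-K^m)$ for $m\not\equiv 1 \pmod N$; this is true and follows from the PBW basis together with the Taft--Wilson description of the first term of the coradical filtration, so that step survives. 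But your Ansatz $g(E)=\lambda E+\mu(K-1)$ then omits a possible component along $FK$. The omission is harmless: $\mathrm{Ad}_K$ acts on $E$, $FK$ and $K-1$ with eigenvalues $q^2$, $q^{-2}$ and $1$, which are pairwise distinct because $N$ is odd and $N>1$ (so $q^2\neq 1$ and $q^4\neq 1$), hence the relation $KEK^{-1}=q^2E$ kills both extra components; the same remark applies to $g(F)$. With that correction the rest of your argument goes through.
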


\begin{proof}
Let $f$ be a Hopf algebra automorphism of $u_q(sl(2))^*$.
It is known (see e.g. \cite{pw}, Chapter 9, or \cite{kas}, Chapter VI) that there are exactly $N$ isomorphism classes of simple   $u_q(sl(2))^*$-comodules, of respective dimensions $1, \ldots ,N$. Hence the tensor equivalence on comodules induced by $f$ must preserve each such comodule. The two-dimensional one has $(x_{ij})$
as matrix of coefficients, hence there exists an invertible matrix $P$ such that
$$f(\begin{pmatrix} x_{11} & x_{12 } \\ x_{21} & x_{22}\end{pmatrix}) = P \begin{pmatrix} x_{11} & x_{12 } \\ x_{21} & x_{22}\end{pmatrix}P^{-1}$$
Arguing as in the proof of Theorem 5.3 in \cite{bi1}, we see that $P$ is diagonal, and then that $f=f_r$ for some $r\in k^*$. The last assertion is immediate.
\end{proof}

We have ${\rm Gal}(u_q(sl(2))^*)={\rm Cleft}(u_q(sl(2))^*)$ since this Hopf algebra is finite-dimensional. Moreover ${\rm Cleft}(\mathcal O({\rm SL}_q(2))$ is trivial if $q \not=1$, see the first part of the proof of Corollary 16 in \cite{au}. Hence, by Corollary \ref{corobigal}, any $u_q(sl(2))^*$-bi-Galois object is isomorphic to ${^f\!T}_g$ for some $g \in {\rm SL}(2)$ and a Hopf algebra automorphism $f$ of $u_q(sl(2))^*$.
The combination of the previous two  lemmas then shows the surjectivity of the map $\mathcal T$, and this concludes the proof of Theorem \ref{main}.

\begin{remark}
{\rm  It follows also from the fact that the map ${\rm Gal}(u_q(sl(2)))^*) \rightarrow {\rm Gal}(\mathcal O({\rm SL}_q(2))$ is trivial  and Schauenburg's exact sequence that there is a bijection ${\rm SL}(2)/k^* \simeq {\rm Gal}(u_q(sl(2))^*)$, where $k^*$ is viewed as the diagonal subgroup of ${\rm SL}(2)$. Also we see that any right $u_q(sl(2))^*$-Galois object is in fact a $u_q(sl_2))^*$-bi-Galois object: this implies (\cite{sc1}) that $u_q(sl(2))^*$ is categorically rigid in the sense that if $H$ is any Hopf algebra having its tensor category of comodules equivalent to the one of $u_q(sl(2))^*$, then $H$  is isomorphic to $u_q(sl(2))^*$. Such a property was shown for the Taft algebras in \cite{sc00}, but is not true for $u_q(sl(2))$ (\cite{scha}).}
\end{remark}

\begin{remark}
{\rm An action of ${\rm SL}(n)$ on the category of $u_q(sl(n))$-modules, in the even root of unity case, was also constructed in \cite{ag}, Corollary 2.5.}
\end{remark}

We now discuss the lazy cohomology of $ u_q(sl(2))^*$. We will use the following easy lemma.

\begin{lemma}
 Let $q : L \rightarrow B$ be a surjective Hopf algebra map. Assume that there exists a cocycle $\sigma$ on $B$ that is not lazy. Then the cocycle $\sigma_q= \sigma (q \otimes q)$ on $L$ is not lazy. 
\end{lemma}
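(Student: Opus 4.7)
The plan is to prove this by contrapositive: assume $\sigma_q$ is lazy, and deduce that $\sigma$ must be lazy, contradicting the hypothesis on $\sigma$.

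First I would note that $\sigma_q = \sigma \circ (q\otimes q)$ is indeed a normalized 2-cocycle on $L$: this is a standard pullback verification using that $q$ is both an algebra and a coalgebra map, so the cocycle equation for $\sigma$ at arguments $(q(x), q(y), q(z))$ translates directly into the cocycle equation for $\sigma_q$ at $(x, y, z)$. (This also implicitly appears in the very setup of the lemma, since the author calls $\sigma_q$ a cocycle.)

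Now suppose $\sigma_q$ is lazy. The laziness relation, written out in $L$, reads
\begin{equation*}
\sigma(q(x_{(1)}), q(y_{(1)}))\, x_{(2)} y_{(2)} = \sigma(q(x_{(2)}), q(y_{(2)}))\, x_{(1)} y_{(1)}
\end{equation*}
for all $x, y \in L$. Apply the algebra map $q$ to both sides. Since $q$ is multiplicative and is a coalgebra map, $q(x_{(i)}) = q(x)_{(i)}$, and we obtain
\begin{equation*}
\sigma(q(x)_{(1)}, q(y)_{(1)})\, q(x)_{(2)} q(y)_{(2)} = \sigma(q(x)_{(2)}, q(y)_{(2)})\, q(x)_{(1)} q(y)_{(1)}.
\end{equation*}
Setting $a = q(x)$, $b = q(y)$, this is exactly the laziness condition for $\sigma$ evaluated at $(a, b)$. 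Since $q$ is surjective, every pair $(a, b) \in B \times B$ arises in this way, so $\sigma$ is lazy, contradicting our hypothesis.

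The argument is essentially a one-line pullback computation; the only substantive ingredient is the surjectivity of $q$, which is what allows us to promote the identity after applying $q$ into the full laziness condition on $B$. No obstacle is anticipated.
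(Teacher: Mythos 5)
Your proof is correct and is essentially the paper's argument in contrapositive form: both rest on the single computation that applying the bialgebra map $q$ to the laziness expression $\sigma_q(x_{(1)},y_{(1)})x_{(2)}y_{(2)}-\sigma_q(x_{(2)},y_{(2)})x_{(1)}y_{(1)}$ yields the corresponding expression for $\sigma$ at $(q(x),q(y))$, together with surjectivity of $q$ to lift (or cover) the witnessing pair. The paper simply phrases it directly, choosing $a,b\in B$ where $\sigma$ fails to be lazy and lifting them to $L$; there is no substantive difference.
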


\begin{proof}
 Let $a,b \in B$ be such that $\sigma(a_{(1)},b_{(1)})a_{(2)}b_{(2)}\not= \sigma(a_{(2)},b_{(2)})a_{(1)}b_{(1)}$.
For $x,y \in L$ with $q(x)=a$ and $q(y)=b$, we have $q(\sigma_q(x_{(1)},y_{(1)})x_{(2)}y_{(2)}- \sigma_q(x_{(2)},y_{(2)})x_{(1)}y_{(1)}) \not=0$, which gives the result.
\end{proof}

\begin{corollary}
 The lazy cohomology group ${\rm H}^2_{\ell}(u_q(sl(2))^*)$ is trivial.
\end{corollary}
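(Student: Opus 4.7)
The plan is to combine Theorem~\ref{main} with the simplicity of ${\rm PSL}(2,k)$. First, by the preliminaries, ${\rm H}^2_{\ell}(u_q(sl(2))^*)$ is isomorphic to the normal subgroup ${\rm Bicleft}(u_q(sl(2))^*)\subset{\rm BiGal}(u_q(sl(2))^*)$; Theorem~\ref{main} identifies ${\rm BiGal}(u_q(sl(2))^*)$ with ${\rm PSL}(2,k)$, so ${\rm H}^2_{\ell}(u_q(sl(2))^*)$ realises itself as a normal subgroup $N$ of ${\rm PSL}(2,k)$.

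Next I would invoke the classical fact that, since $k$ is algebraically closed of characteristic zero, ${\rm PSL}(2,k)$ is a simple non-abelian group, so $N$ is either trivial or all of ${\rm PSL}(2,k)$. To exclude the second possibility, I will use that ${\rm H}^2_{\ell}$ is an abelian group (see \cite{bc}; under the identification with ${\rm Bicleft}$ it corresponds to the group of tensor auto-equivalences of ${\rm comod}(u_q(sl(2))^*)$ which are isomorphic to the identity as linear functors, and such a group is commutative by a standard Eckmann--Hilton-style argument). Since ${\rm PSL}(2,k)$ admits no non-trivial abelian normal subgroup, $N$ must be trivial, which gives the corollary.

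An alternative route, which is the one actually suggested by the placement of the preceding lemma, is to rule out $N={\rm PSL}(2,k)$ by exhibiting an explicit non-bicleft element of ${\rm BiGal}(u_q(sl(2))^*)$. Dualising the inclusion of the Taft sub-Hopf algebra $\langle K,E\rangle\subset u_q(sl(2))$ yields a surjective Hopf algebra map $u_q(sl(2))^*\twoheadrightarrow B$ onto a Taft algebra $B$; Taft algebras admit non-lazy $2$-cocycles, and the preceding lemma then pulls such a cocycle back to a non-lazy cocycle on $u_q(sl(2))^*$, which produces a cleft bi-Galois object that is not bicleft. The main obstacle in this second route is to match the resulting cocycle twist with a concrete element of ${\rm PSL}(2,k)\setminus {\rm Bicleft}$ via the parametrisation $g\mapsto [T_g]$ of Section~3 (one must check in particular that the twisted Hopf algebra is still isomorphic to $u_q(sl(2))^*$, so that the bi-Galois object genuinely lives in ${\rm BiGal}(u_q(sl(2))^*)$), whereas the first route is essentially obstruction-free once the abelianness of ${\rm H}^2_{\ell}$ is invoked.
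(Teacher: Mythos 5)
Your second route is exactly the paper's proof: ${\rm Bicleft}(u_q(sl(2))^*)\simeq {\rm H}^2_{\ell}(u_q(sl(2))^*)$ is a normal subgroup of ${\rm BiGal}(u_q(sl(2))^*)\simeq {\rm PSL}(2)$, which is simple, so it suffices to exhibit a single non-bicleft bi-Galois object, equivalently a single non-lazy cocycle; this is obtained by pushing a non-lazy cocycle of a Taft algebra quotient $B$ of $u_q(sl(2))^*$ (the quotient by $x_{21}=0$, which is the dual of the Borel you describe) back along the surjection, via the preceding lemma. However, the ``main obstacle'' you attribute to this route is not actually there: one never needs to locate the cocycle twist as a concrete element of ${\rm PSL}(2)$, nor to identify the Doi-twisted Hopf algebra. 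The argument is a pure dichotomy: ${\rm Bicleft}$ is either trivial or all of ${\rm BiGal}$, and the second alternative is incompatible with the existence of a non-lazy cocycle, because ${\rm Gal}={\rm Cleft}$ in finite dimension and every right $u_q(sl(2))^*$-Galois object has already been shown to underlie one of the bi-Galois objects $T_g$. So the bare existence statement finishes the proof.

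The genuine gap is in your first, preferred route, namely the claim that ${\rm H}^2_{\ell}$ is abelian. This is not a theorem, and the ``Eckmann--Hilton-style argument'' does not apply here. Eckmann--Hilton shows that the \emph{monoidal natural automorphisms of the identity tensor functor} form an abelian group; that object is the analogue of ${\rm H}^1_{\ell}$, and indeed ${\rm H}^1_{\ell}(H)$ (central group-like elements of $H^*$) is always abelian. By contrast, ${\rm Bicleft}(H)$ is the group of \emph{tensor structures on the identity functor} up to isomorphism of tensor functors; it carries a single operation (composition, i.e.\ the cotensor product), and there is no second compatible unital operation to run an interchange argument on. Whether ${\rm H}^2_{\ell}(H)$ is always abelian was left open in \cite{bc}, and the answer is negative in general (non-abelian examples are known, e.g.\ for duals of group algebras by work of Guillot and Kassel). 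Hence abelianness cannot be invoked, and for this Hopf algebra the only available way to rule out ${\rm Bicleft}={\rm BiGal}$ is precisely your second route --- the explicit non-lazy cocycle --- which is what the paper does.
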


\begin{proof}
 Since ${\rm PSL}(2)$ is a simple group and ${\rm H}^2_{\ell}(u_q(sl(2))^*)$ is a normal subgroup, it is enough to check that there exists a bi-Galois object that is not bicleft, or equivalently that there exists a cocycle on $u_q(sl(2))^*$ that is not lazy. Consider $B$, the quotient of $u_q(sl(2))^*$ by the relation $x_{21}=0$. This is a Hopf algebra, isomorphic to a Taft algebra. It is known that the Taft algebras have cocycles that are not lazy \cite{bc}, and hence
the previous lemma concludes the proof.
\end{proof}

\bibliographystyle{amsalpha}

\end{document}